\newtheorem{thm}{Theorem}
\def\card#1{\vert #1 \vert}
\def\gpindex#1#2{\card {#1\colon #2}}
\def\irr#1{{\rm  Irr}(#1)}
\def\cent#1#2{{\bf C}_{#1}(#2)}
\def\gpcen#1{{\bf Z} (#1)}
\def\ker#1{{\rm ker} (#1)}
\begin{document}

\title{Classifying Camina groups: \\ A theorem of Dark and Scoppola}

\author {
       Mark L.\ Lewis
    \\ {\it Department of Mathematical Sciences, Kent State University}
    \\ {\it Kent, Ohio 44242}
    \\ E-mail: lewis@math.kent.edu
       }
\date{September 26, 2011}

\maketitle

\begin{abstract}
Recall that a group $G$ is a Camina group if every nonlinear
irreducible character of $G$ vanishes on $G \setminus G'$. Dark and
Scoppola classified the Camina groups that can occur.  We present a
different proof of this classification using Theorem \ref{lem},
which strengthens a result of Isaacs on Camina pairs. Theorem
\ref{lem} is of independent interest.

MSC Primary: 20D10, MSC Secondary: 20C15

Keywords: Camina groups, Frobenius groups, extra-special $p$-groups
\end{abstract}


\section{Introduction}

Throughout this note $G$ will be a finite group.  In this note, we focus on Camina groups.  A nonabelian group $G$ is a Camina group if the conjugacy class of every element $g \in G \setminus G'$ is $gG'$.  It is clear from the earliest papers that the study of Camina groups and the more general objects Camina pairs was motivated by finding a common generalization of Frobenius groups and extra-special groups.  It is not difficult to see that extra-special groups and Frobenius groups with an abelian Frobenius complement are Camina groups.  Thus, one question that seems reasonable to ask is whether there are any other Camina groups and if we can classify the Camina groups.

In \cite{DaSc}, Dark and Scoppola stated that they had  completed the classification of all Camina groups.  The classification is encoded in the following theorem.

\begin{thm}{\rm [Dark and Scoppola]}\label{main}
Let $G$ be a group.  Then $G$ is Camina group if and only if one of the following holds:

\begin{enumerate}
\item $G$ is a Camina $p$-group of nilpotence class $2$ or $3$.
\item $G$ is a Frobenius group with a cyclic Frobenius complement.
\item $G$ is a Frobenius group whose Frobenius complement is isomorphic to the quaternions.
\end{enumerate}
\end{thm}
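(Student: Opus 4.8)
The plan is to prove both directions, disposing of the reverse (sufficiency) implication by a direct character computation and devoting the bulk of the work to the forward (necessity) implication, which I would organize as an induction on $\card{G}$ resting on Theorem \ref{lem}. For sufficiency the only substantive families are (2) and (3), since (1) is Camina by hypothesis. Write a Frobenius group as $G = K \rtimes H$ with kernel $K$ and complement $H$. Because $H$ acts fixed-point-freely, $[K,H] = K$, so $K \le G'$ and in fact $G'$ is the preimage of $H'$ under the projection $G \to G/K \cong H$. The irreducible characters of $G$ fall into two kinds: those containing $K$ in their kernel, which are inflations of $\irr{H}$, and those not containing $K$, which are induced from nontrivial characters of $K$ and therefore vanish on $G \setminus K \supseteq G \setminus G'$. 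An inflation $\tilde\psi$ of $\psi \in \irr{H}$ vanishes on $G \setminus G'$ exactly when $\psi$ vanishes on $H \setminus H'$, since $G \to H$ carries $G \setminus G'$ onto $H \setminus H'$. Thus $G$ is Camina if and only if every nonlinear $\psi \in \irr{H}$ vanishes on $H \setminus H'$, that is, $H$ is abelian or itself a Camina group. A cyclic $H$ is abelian, giving (2); and $H \cong Q_8$ has a single nonlinear irreducible character, of degree $2$, vanishing off $\gpcen{Q_8} = Q_8'$, giving (3).

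For necessity I would first invoke Theorem \ref{lem}: applied to the Camina pair $(G, G')$ it forces the dichotomy that either $G$ is a $p$-group or $G$ is a Frobenius group with kernel $G'$. This is the step where the strengthening of Isaacs does the real work. In the Frobenius case I run the computation above in reverse: the Camina condition on the inflated characters shows that $H \cong G/G'$ is abelian or a Camina group, and $H$ is a Frobenius complement. If $H$ is abelian then all its Sylow subgroups are cyclic, so $H$ is cyclic, giving (2). If $H$ is a nonabelian Camina group, the inductive hypothesis places $H$ among the three types; it cannot be a proper Frobenius group, for such a group contains a nonabelian subgroup of order $pq$ with $p \neq q$, whereas every subgroup of order $pq$ in a Frobenius complement is cyclic. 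Hence $H$ is a Camina $p$-group that is a Frobenius complement, so $H$ is cyclic or generalized quaternion; being nonabelian it is some $Q_{2^n}$, and the Camina condition forces $H \cong Q_8$, since once $n \ge 4$ the element of maximal order has a conjugacy class of size $2$ rather than $\card{H'}$. This yields (3).

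It remains to treat the case in which $G$ is a $p$-group; here $G$ is nonabelian, so its nilpotence class is at least $2$, and the entire content is to bound the class by $3$. I would exploit the Camina condition in its class-size form: every $g \in G \setminus G'$ satisfies $\card{\cent{G}{g}} = \gpindex{G}{G'}$, so the centralizers of elements lying outside $G'$ are uniformly small, and I would translate this into constraints on the successive quotients $\gamma_i(G)/\gamma_{i+1}(G)$ and on the position of $\gpcen{G} \le G'$ in the lower central series. Propagating these constraints far enough down the series to conclude $\gamma_4(G) = 1$ is, I expect, the main obstacle: it is the genuinely computational heart of the theorem, in sharp contrast with the Frobenius branch, which flows formally from Theorem \ref{lem} and the induction. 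Once the class is known to be $2$ or $3$ we are in case (1), and the proof of Theorem \ref{main} is complete.
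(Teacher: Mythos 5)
Your necessity argument fails at its very first step, and everything after depends on it. You propose to obtain the dichotomy ``$G$ is a $p$-group or $G$ is Frobenius with kernel $G'$'' by applying Theorem \ref{lem} ``to the Camina pair $(G,G')$.'' But Theorem \ref{lem} cannot be applied to anything of the sort: its hypotheses require a Camina \emph{$p$-group} $P$ acting on a nontrivial \emph{$p'$-group} $Q$ with $\cent{P}{x} \le P'$ for all $x \in Q \setminus \{1\}$, and its conclusion is that this particular action is Frobenius with $P$ quaternion --- it produces no case division for a general Camina group. The case division is the content of a different result, Isaacs' Theorem 2.1 of \cite{coprime}, which the paper invokes: if $G$ is a Camina group, then either $G$ is Frobenius with kernel $G'$, or $G/G'$ is a $p$-group and $G$ has a normal $p$-complement $Q$ with $\cent{G}{x} \le G'$ for all $x \in Q \setminus \{1\}$. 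Only in the second case, when $G$ is not itself a $p$-group, does Theorem \ref{lem} enter: one takes a Sylow $p$-subgroup $P$, observes $G' = P'Q$ so that $P \cap G' = P'$ and hence $\cent{P}{x} \le P'$ for nontrivial $x \in Q$, and applies Theorem \ref{lem} to $P$ acting on $Q$ to conclude $P \cong Q_8$ and the action is Frobenius. Note also that your dichotomy is false as a statement: a Camina Frobenius group with complement $Q_8$ has Frobenius kernel of index $2$ in $G'$, not equal to $G'$. Indeed your own argument is internally inconsistent on this point --- if the kernel were $G'$, the complement would be isomorphic to $G/G'$ and hence abelian, yet you go on to analyze the case of a nonabelian Camina complement.

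Two further remarks. First, in the $p$-group branch you correctly identify the bound ``class at most $3$'' as the hard content, but you do not prove it; you only say you ``expect'' the lower-central-series computation to work. The paper does not prove it either, but it explicitly cites Dark and Scoppola \cite{DaSc} for exactly this; an acknowledged hole is not a citation, so as written your proof of case (1) is incomplete. Second, your treatment of the genuinely Frobenius case --- reduce to the complement $H$, show $H$ is abelian or Camina, rule out Frobenius $H$ via the structure of Frobenius complements, and show $Q_{2^n}$ is Camina only for $n = 3$ --- is a workable alternative to the paper's route (the paper never needs to classify Camina Frobenius complements, because in its Frobenius-with-kernel-$G'$ case the complement is automatically abelian, and the quaternion case arises instead from Theorem \ref{lem}). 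Likewise your sufficiency computation is correct and more detailed than the paper's one-line assertion. But neither of these repairs the missing first step: without Isaacs' Theorem 2.1 (or a proof of it), you have no way to reach either the Frobenius configuration or the coprime-action configuration that Theorem \ref{lem} needs.
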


In fact, the work in \cite{DaSc} is the capstone of several results that combined lead to the classification.  The first result needed is that if $G$ is a Camina group, $P$ is a Sylow $p$-subgroup for some prime $p$, and $G/G'$ is a $p$-group, then $P$ is a Camina group which is proved in Lemma 3.6 of \cite{ChMc}.  The second result needed is Theorem 3 of \cite{ChMaSc} which states that if $G$ is a Camina group such that $G/G'$ is a $p$-group for some prime $p$ and a Sylow $p$-subgroup of $G$ has nilpotence class at most $p + 1$, then either $G$ is a $p$-group or $G$ is a Frobenius group whose complement is either cyclic or quaternion.  Then Dark and Scopolla proved in \cite{DaSc} that Camina $p$-groups have nilpotence class $2$ or $3$.  This proves Theorem \ref{main} under the hypothesis that $G/G'$ is a $p$-group for some prime $p$.  Finally, by Theorem 2.1 of \cite{coprime}, we know that if $G$ is a Camina group, then either $G$ is a Frobenius group with abelian Frobenius complement or $G/G'$ is a $p$-group for some prime $p$.  Combining all of these results, one obtains the above theorem.

The referee has pointed out that there is a fixable gap in the argument in the previous paragraph.  Theorem 3 of \cite{ChMaSc} relies on Lemma 2.1 of \cite{more}.  As stated in \cite{DaSc}, ``the sketch of proof given for Lemma 2.1 in \cite{more} does not appear to be conclusive.''  The referee has then stated that ``it is easy to fix the gap in the proof of Theorem 3 of \cite{ChMaSc} and obtain the corollary of \cite{DaSc}, because the hypothesis that $P$ has class $3$, with the fact that the center of $P$ is cyclic, cuts out the part of the proof that uses Lemma 2.1 of \cite{more}.''  In addition, we should mention that our Ph.D. student Nabil Mlaiki has recently proved in \cite{diss} that Lemma 2.1 of \cite{more} is true, and this would give a different way of mending the argument.

Our purpose in this paper is to present a different proof of most of Theorem \ref{main}.  In particular, we prove:

\begin{thm}\label{ours}
If $G$ is Camina group, then one of the following holds:
\begin{enumerate}
\item
$G$ is a $p$-group, or
\item
$G$ is a Frobenius group whose complement is
either cyclic or quaternion.
\end{enumerate}
\end{thm}

In other words, we do not prove that Camina $p$-groups have nilpotence class at most $3$.  We refer the reader to Dark and
Scoppola, \cite{DaSc}, for the proof that Camina $p$-groups have nilpotence class $2$ or $3$, and in fact, we will use that result in our work here.  Our proof of Theorem \ref{ours} is based on the work of Isaacs regarding Camina pairs in \cite{coprime}.  The key to our work is to generalize Lemma 3.1 of \cite{coprime} where Isaacs proved that if $P$ is a $p$-group with class at most $2$ and $P$ acts on a nontrivial $p'$-group $Q$ so that $\cent Px \le P'$ for all $x \in Q \setminus \{ 1 \}$, then the action is Frobenius and $P$ is either cyclic or isomorphic to the quaternions.  Theorem \ref{lem} builds on this result. We have replaced the hypothesis that the nilpotence class is at most $2$ with the hypothesis that $P$ is a Camina group. Essentially, what we prove is that $P$ cannot be a Camina $p$-group with nilpotence class $3$.

\begin{thm}\label{lem}
Let $P$ be a Camina $p$-group that acts on a nontrivial $p'$-group $Q$ so that $\cent Px \le P'$ for every $x \in Q \setminus \{ 1 \}$.  Then the action of $P$ is Frobenius and $P$ is the quaternions.
\end{thm}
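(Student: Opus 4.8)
The plan is to build on Isaacs' Lemma 3.1 from \cite{coprime} by first establishing, using the Camina hypothesis, that the action of $P$ on $Q$ must be Frobenius and that the nilpotence class of $P$ cannot exceed what the quaternion conclusion allows. Since a Camina $p$-group has nilpotence class $2$ or $3$ by Dark and Scoppola, the heart of the matter is to rule out class $3$ entirely and, in the remaining class-$2$ case, to invoke Isaacs' result directly to force $P$ to be cyclic or quaternion, then eliminate the cyclic possibility. The Camina condition gives us strong character-theoretic and commutator control: in a Camina $p$-group, $G/G'$ is elementary abelian and every nonlinear irreducible character vanishes off $G'$, which translates (via the class-equation / conjugacy-class description) into the fact that for each $g \in P \setminus P'$ the conjugacy class of $g$ is exactly the coset $gP'$, so $\card{P'} = \card{\cent Pg}/\card{\gpcen P}$ behaves very rigidly.

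First I would set up the coprime action machinery. Because $\gcd(\card P, \card Q) = 1$, I can use the standard fixed-point and Glauberman-type results freely; in particular $\cent Px$ is computed cleanly for each $x \in Q$. The hypothesis $\cent Px \le P'$ for all nontrivial $x$ says that no element of $Q \setminus \{1\}$ is centralized by anything outside $P'$. The first real step is to show the action is Frobenius, i.e.\ that $\cent Px = 1$ for all $x \neq 1$: I would argue that if some nontrivial $P$-subgroup $R \le P'$ fixed a nonidentity $x$, then transfer/coprime arguments combined with the Camina structure of $P$ force a contradiction, roughly because the Camina condition makes $P'$ too ``uniform'' to admit a proper nontrivial fixed-point configuration inside it. Once the action is Frobenius, $P$ embeds as a Frobenius complement, so every abelian subgroup of $P$ is cyclic and every Sylow subgroup (here $P$ itself) is cyclic or generalized quaternion.

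The main obstacle, and the crux of the whole theorem as the introduction signals, is excluding the nilpotence class $3$ case — equivalently, showing $P$ is \emph{not} a Camina $p$-group of class $3$. The plan here is to suppose $P$ has class $3$ and derive a contradiction with the Frobenius-complement constraint. A class-$3$ Camina $p$-group has a nontrivial $P/P'$ on which the Frobenius action descends, and I would examine the induced action of $P/\gpcen P$ or of $P$ on a faithful irreducible module for $Q$ (or a Brauer-character argument on an irreducible constituent of the permutation-type module) to show that the fixed-point structure $\cent Px \le P'$ cannot coexist with the two-step commutator filtration $P \supset P' \supset [P',P] \supset 1$ of a class-$3$ group. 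Concretely, I expect to compare $\card{P'}$ against the orbit sizes on $Q$ forced by the Frobenius action: a generalized quaternion group of order $> 8$ has class $2$ in small cases but the class-$3$ Camina $p$-groups have order and derived-length data incompatible with being a Frobenius complement acting with $\cent Px \le P'$. This incompatibility is where I anticipate the most delicate counting, since I must extract a numerical contradiction from the interaction between the Camina class equation and the coprime orbit sizes.

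Having eliminated class $3$, I am reduced to $P$ of class $2$, where Isaacs' Lemma 3.1 applies verbatim and yields that $P$ is cyclic or isomorphic to the quaternions. It then remains only to delete the cyclic alternative: a nontrivial cyclic $p$-group is abelian, hence has $P' = 1$, so the hypothesis $\cent Px \le P' = 1$ would force $\cent Px = 1$ for all $x \neq 1$ \emph{and} $P$ abelian acting Frobeniusly, which is consistent with a cyclic complement in general — so the real reason cyclic is excluded must be that an abelian group is not a Camina group by definition (the definition requires $G$ nonabelian, and more pointedly a Camina group has $P' \neq 1$). Thus the cyclic case contradicts $P$ being Camina, leaving $P \cong Q_8$ (the quaternions) as the only surviving possibility, which completes the proof.
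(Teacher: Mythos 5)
There is a genuine gap: your proposal never actually proves either of its two load-bearing steps. The first step --- showing the action of $P$ on $Q$ is Frobenius --- is asserted only via ``transfer/coprime arguments combined with the Camina structure'' and the claim that $P'$ is ``too uniform,'' which is not an argument. But this is precisely where the entire difficulty of the theorem lives: the hypothesis $\cent Px \le P'$ explicitly permits nontrivial elements of $P'$ to have fixed points, and ruling this out (equivalently, deriving a contradiction when $P$ has class $3$) is what the paper's proof spends all its effort on. Note also that if you \emph{could} establish the Frobenius property directly, the theorem would follow at once --- a $p$-group Frobenius complement is cyclic or generalized quaternion, cyclic groups are abelian hence not Camina, and generalized quaternion groups of order exceeding $8$ have class $3$ hence are not Camina by Macdonald --- so your later steps (excluding class $3$ by ``delicate counting,'' then invoking Isaacs in class $2$) would be redundant; the fact that your outline needs them separately signals that the Frobenius claim was never really in hand. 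Your second key step, the class-$3$ exclusion, is likewise only a statement of intent (``I expect to compare,'' ``I anticipate the most delicate counting''), and it contains a factual error: generalized quaternion groups of order greater than $8$ have class at least $3$, not class $2$.

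For comparison, the paper's proof of the class-$3$ exclusion is an elaborate argument with no shortcut of the kind you sketch: take a minimal counterexample $P$ (necessarily of class $3$, $p$ odd), reduce to $Q$ an irreducible module, show by induction on $\card P$ that $\cent Px \cap \gpcen P = 1$ for all nontrivial $x$ (so $\gpcen P$ acts Frobeniusly and is cyclic), identify $Q$ with a faithful irreducible character $\chi$ of $P$ via Brauer characters, and then combine monomiality of $P$, the full ramification of $\chi$ over the center (from \cite{nondiv}), and the structural results of MacDonald and Mann ($\gpindex P{P'} = \gpindex {P'}{\gpcen P}^2$, $P'$ elementary abelian, $P/\gpcen P$ of exponent $p$) to pin down $\gpindex P{P'} = p^4$, $\card {P'} = p^3$, $\card {\gpcen P} = p$ and the structure of the inducing subgroup $R$. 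The contradiction then comes from explicitly constructing an element $g \in P \setminus R$ with $g^p = 1$, which permutes the direct summands of $Q = W_1 \oplus \cdots \oplus W_r$ and therefore centralizes a nonzero vector while lying outside $P'$. None of this machinery --- nor any substitute for it --- appears in your proposal, so what you have is a correct statement of the proof's target together with the easy endgame (cyclic is excluded because Camina groups are nonabelian), but not a proof.
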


If $P$ and $Q$ satisfy all the hypotheses of Theorem \ref{lem} except the first (i.e., we are not assuming that $P$ is a Camina group), then $P$ is called a Frobenius-Wielandt complement.  For $2$-groups, the generalized quaternion groups are such groups in their Frobenius action.  An example of such a group is constructed for each odd prime in Beispiel III.10.15 of \cite{hup}.  These groups have been studied in a number of places (in particular, see \cite{esp} and \cite{scop}).  In any case, some additional hypothesis is needed on $P$ for the conclusion Theorem \ref{lem} to be true.


Most of the work in proving the Theorem \ref{ours} is included in the proof of Theorem \ref{lem}.
One motivation for publishing this note is that Herzog, Longobardi, and Maj make use of Theorem \ref{lem} in their article \cite{infinite}.
%
%
%
We would like to thank David Chillag, Marcel Herzog, Marty
Isaacs, and the referee for their useful comments on this paper.

\section{Proof}

In proving Theorem \ref{lem}, we will strongly use facts regarding Camina $p$-groups that were proved in \cite{DaSc}, \cite{MacD1}, \cite{more}, and \cite{Mann}.

\begin{proof}[Proof of Theorem \ref{lem}]
We will assume that the lemma is not true, and work to find a
contradiction.  We take $P$ to be a group that violates the
conclusion with $\card P$ minimal.

If $P$ has nilpotence class at most $2$, then our conclusion is the
conclusion of Lemma 3.1 of \cite{coprime}, and so, $P$ is not a
counterexample. Thus, $P$ has nilpotence class at least $3$.
MacDonald proved in \cite{more} that a Camina $2$-group has
nilpotence class $2$, and $p$ must be odd.  Dark and Scoppola proved
in \cite{DaSc} that a Camina $p$-group has nilpotence class at most
$3$. Therefore, $P$ must have nilpotence class $3$.

Note that we may assume that $Q$ has no proper nontrivial subgroups
that are invariant under the action of $P$ since we may replace $Q$
by any such subgroup.  Since $Q$ has no proper nontrivial subgroups
that are invariant under the action of $P$, it follows that $Q$ is
an elementary abelian $q$-group for some prime $q \ne p$.  Let $Z =
\gpcen P$. Suppose that there exists some element $x \in Q \setminus
\{ 1 \}$ with $\cent Px \cap Z > 1$. Let $Y = \cent Px \cap Z$, and
let $C = \cent QY$.  We have $x \in C$, and since $Y$ is normal in
$P$, it follows that $P$ stabilizes $C$.  This implies that $C = Q$.
Hence, $P/Y$ acts on $Q$.  Notice that $\cent {P/Y}x = \cent Px/Y
\le P'/Y$ for every $x \in Q \setminus \{ 1 \}$. Also, since $Y$ is
central and $P$ has nilpotence class $3$, it follows that $P/Y$ is
not abelian, and hence, $P/Y$ is a Camina $p$-group. Applying the
inductive hypothesis, we have that $P/Y$ is the quaternions. Since
$p$ is odd, this is a contradiction.

We now have $\cent Px \cap Z = 1$ for all $x \in Q \setminus \{ 1
\}$. This implies that $Z$ acts Frobeniusly on $Q$. Hence, $Z$ is
cyclic.  Hence, we may view $Q$ as an irreducible module for $F[P]$
where $F$ is the field of $q$ elements.  Let $K$ be the ring of
$F[P]$-endomorphisms of $Q$, and by Schur's lemma $K$ is a division
ring.  It is finite, so $K$ is a field, and we can view $Q$ as a
$K[P]$-module.  (Notice that we are not changing $Q$ as a set, we
just view it differently.) Now, $Q$ will be absolutely irreducible
as a $K[P]$-module.  By Theorem 9.14 of \cite{text}, $Q$ gives rise
to an irreducible representation of algebraically closed field that
contains $K$. Hence, $Q$ will determine an irreducible $q$-Brauer
character for $G$ (see page 264 of \cite{text}).  Since $q$ does not
divide $\card P$, the irreducible $q$-Brauer characters of $P$ are
just the ordinary irreducible characters of $P$ (Theorem 15.13 of
\cite{text}).  Thus, $Q$ corresponds to a complex irreducible
character $\chi$ of $P$. Notice that $\ker {\chi} = \cent PQ$ is
contained in the centralizer of all elements of $Q$, so $\ker {\chi}
\cap Z = 1$, and this implies that $\ker {\chi} = 1$.  Since $G$ is
nonabelian, this implies that $\chi (1) \ge p$.

Now, $P$ is an $M$-group, so there is a subgroup $R$ and a linear
character $\lambda \in \irr R$ so that $\lambda^P = \chi$.  Also,
there is a $K[R]$-module $W$ corresponding to $\lambda$ so that $W^P
= Q$.  Since $\lambda$ is linear, it follows that $R/\ker {\lambda}
= R/ \cent RW$ is cyclic.  We can write $Q = W_1 \oplus W_2 \oplus
\cdots \oplus W_r$ where $r = \gpindex PR = \chi (1)$ and $W_1 \cong
W$.  In \cite{nondiv}, we saw that irreducible characters of Camina
$p$-groups with nilpotence class $3$ whose kernels do not contain
the center are fully-ramified with respect to the center.  It
follows that $\chi$ is fully-ramified with respect to $P/Z$.

We claim that it suffices to find an element $g \in P \setminus RP'$
so that $g^p = 1$.  Suppose such an element $g$ exists.  We can
relabel the $W_i$ so that for $i = \{ 1, \dots, p \}$, we have $W_i
= W_1 g^{i-1}$. Fix $w_1 \in W_1 \setminus \{ 0 \}$, and set $w_i =
w_1 g^{i-1}$.  It follows that $w_i g = w_{i+1}$ for $1 \le i \le
p-1$ and $w_p g = w_1$.  Now, take $x \in Q$ so that $x = (w_1,
\dots, w_p, 0, \dots, 0)$.  Notice that $x \ne 0$ and $g$ just
permutes the $w_i$'s, so $g$ will centralize $x$.  Since $g$ is not
in $P'$, this will violate $\cent Px \le P'$.

Before we work to find the element $x$, we gather some information
regarding $R$.  Notice that $\cent RW$ is contained in the
centralizers of elements of $Q$ whose only nonzero component lies in
$W$.  It follows that $\cent RW \le P'$ and $\cent RW \cap Z = 1$.
Since $\chi$ is induced from $R$, we have $Z \le R$.

We know from \cite{MacD1} that $\gpindex P{P'} = \gpindex {P'}Z^2$
and so, $\gpindex PZ = \gpindex {P'}Z^3$.  Since $\chi$ is
fully-ramified with respect to $P/Z$,  it follows that $\gpindex PR
= \chi (1) = \gpindex PZ^{1/2} = \gpindex {P'}Z^{3/2}$. We also have
$\gpindex {P'}Z^2 = \gpindex P{P'}$. This implies $\gpindex PR =
\gpindex P{P'}^{3/4}$.  By \cite{MacD1}, $P/P'$ is elementary
abelian. Since $RP'/P'$ is cyclic, we conclude that $\gpindex
{RP'}{P'} = p$.  We now have that $\gpindex PR \ge \gpindex P{RP'} =
\gpindex P{P'}/p$ and hence, $\gpindex P{P'} \le p^4$.  {}From
\cite{MacD1}, we know that $\gpindex {P'}Z \ge p^2$, so $\gpindex
P{P'} \ge p^4$.  We conclude that $\gpindex P{P'} = p^4$.  This
implies that $\gpindex PR = p^3 = \gpindex P{RP'}$, and hence, $P'
\le R$ and $\gpindex R{P'} = p$. By \cite{Mann}, we know that $P'$
is elementary abelian. Since $P'/\cent RW$ is a subgroup of $R/\cent
RW$, it is cyclic and hence, has order $p$.  It follows that
$R/\cent RW$ is cyclic of order at least $p^2$.

Since $R/P'$ is cyclic and $P'/Z$ is central in $P/Z$, so $R/Z$ is
cyclic-by-central. This implies that $R/Z$ is abelian. Hence, $R'
\le Z$.  We also know that $R' \le \cent RW$. We conclude that $R' =
1$, and $R$ is abelian.  Since $P' \le R$, this implies that $RP' =
R$ and $R$ centralizes $P'$.  Recall that $P'$ is elementary
abelian, and we saw in the previous paragraph that $R$ does not have
exponent $p$. Since $R$ is abelian, the set $\Omega_1 (R)$ of
elements of order $p$ in $R$ is a subgroup of $R$, and $P' \le
\Omega_1 (R) < R$.   We saw that $\gpindex R{P'} = p$, so $\Omega_1
(R) = P'$.  It follows that the elements in $R \setminus P'$ all
have order at least $p^2$.

We now work to obtain the element $x \in P \setminus R$ so that $x^p
= 1$.  (Notice that $RP' = R$, so this is the previous statement.)
We take an element $a \in P \setminus \cent P{P'}$.  Notice that $R
\le \cent P{P'}$, so if $a^p = 1$, then we take $x = a$.  Hence, we
may suppose that $a^p \ne 1$.  By \cite{Mann}, we know that $P/Z$
has exponent $p$, so $a^p = z \in Z$.  Since $P$ is a Camina group
and $a \in P \setminus P'$, it follows that the conjugacy class of
$a$ is $aP'$.  This implies that $\gpindex P{\cent Pa} = \card
{P'}$, and so, $\card {\cent Pa} = \gpindex P{P'} = p^4$.  We know
that $P'$ is not centralized by $a$, so $\cent {P'}a < P'$.  Now,
since $Z$ is cyclic and $P'$ is elementary abelian, we deduce that
$\card Z = p$.  Also, since $\gpindex P{P'} = p^4$ and $\gpindex
P{P'} = \gpindex {P'}Z^2$, we deduce that $\gpindex {P'}Z = p^2$,
and hence, $\card {P'} = p^3$.  In other words, $\card {\cent {P'}a}
\le p^2$, and $\card {\langle a \rangle \cent {P'}a} \le p^3$.
Therefore, there exists an element $b \in \cent Pa \setminus \langle
a \rangle \cent {P'}a$.

First, suppose that $b^p = 1$.  If $b \in R$, then $b \in \Omega_1
(R) = P'$, and so, $b \in \cent Pa \cap P' = \cent {P'}a$ which is a
contradiction.  Thus, $b \in P \setminus R$, and we take $x = b$.
Hence, we may assume that $b^p \ne 1$.  We know that $b^p \in Z$, so
$b^p = z^i$ for some integer $1 \le i \le p-1$.  There is an integer
$j$ so that $ij \equiv -1 ~({\rm mod~}p)$.  We take $x = ab^j$.
Notice that $x \in \cent Pa$ and $x^p = (ab^j)^p = a^p b^{jp} = z
z^{ij} = z z^{-1} = 1$.  If $x \in R$, then $x \in \Omega_1 (R) =
P'$, and hence, $x \in \cent {P'}a$.  This implies that $ab^j \in
\cent {P'}a$, and thus, $b^j \in a^{-1} \cent {P'}a$.  Since $j$ is
coprime to $p$, we end up with $b \in \langle a \rangle \cent {P'}a$
which is a contradiction.  Therefore, $x \in P \setminus R$, and we
have found the desired element.
\end{proof}

In Theorem 2.1 of \cite{coprime}, Isaacs proved that if $(G,K)$ is a
Camina pair where $G/K$ is nilpotent, then either $G$ is a Frobenius
group with Frobenius kernel $K$ or $G/K$ is a $p$-group for some
prime $p$, and if $G/K$ is a $p$-group, then $G$ has a normal
$p$-complement $M$ and $\cent Gm \le K$ for all $m \in M \setminus
\{ 1 \}$.  We do not define Camina pairs in this paper; however, for
our purposes it is enough to know that $G$ is a Camina group if and
only if $(G,G')$ is a Camina pair.  In \cite{Yong}, Y. Ren has noted
that together Theorem 2.1 of \cite{coprime} with Lemma 3.1 of
\cite{coprime} imply that if $G$ is a Camina group, then one of the
following occurs:
\begin{enumerate}
\item $G$ is Frobenius group with $G'$ its Frobenius kernel.
\item $G$ is a Camina $p$-group.
\item $G = RP$ where $P$ is a Sylow $p$-subgroup of $G$ for some
prime $p$ and $R$ is a normal $p$-complement for $G$ with $R < G'$.
In addition, if $P$ has nilpotence class $2$, then $P$ is the
quaternions and $G$ is a Frobenius group whose Frobenius kernel is
$R$.
\end{enumerate}
Essentially, we will prove using Theorem \ref{lem} that if (3)
occurs, then $P$ must have nilpotence class $2$.  This observation
underlies our argument.

\begin{proof}[Proof of Theorem \ref{ours}]
It is easy to see that each of the groups mentioned are Camina
groups. Thus, we will assume that $G$ is a Camina group, and prove
that it is one of the groups listed.  As we have seen, Theorem 2.1
of \cite{coprime} can be restated in terms of Camina groups as
saying that if $G$ is a Camina group, then either $G$ is a Frobenius
group with $G'$ as its Frobenius kernel or $G/G'$ is a $p$-group for
some prime $p$ and $G$ has a normal $p$-complement $Q$ where $\cent
Gx \le G'$ for all $x \in Q \setminus \{ 1 \}$.

If $G$ is a Frobenius group with $G'$ as its Frobenius kernel, then
the Frobenius complements for $G$ must be abelian, and hence,
cyclic.  Thus, we may assume that $G/G'$ is a $p$-group for some
prime $p$.  If $G$ is a $p$-group, then we are done since we know
that Camina $p$-groups have nilpotence class $2$ or $3$ by
\cite{DaSc}. Thus, we may assume that $G$ is not a $p$-group, and
so, $G$ has a nontrivial normal $p$-complement $Q$. Let $P$ be a
Sylow $p$-subgroup of $G$. Notice that $G' = P'Q$, and so, $P \cap
G' = P'$.  We have $\cent Px \le P \cap G' = P'$ for all $x \in Q
\setminus \{ 1 \}$.  Notice that $P \cong G/Q$ is either abelian or
a Camina group.  If $P$ is abelian, we have $\cent Px = 1$ for all
$x \in Q \setminus \{ 1\}$ and $Q = G'$.  In particular, $G$ is a
Frobenius group with $G'$ as its Frobenius group, and we are done as
before. Finally, we have the case where $P$ is a Camina group where
$\cent Px \le P'$ for all $x \in Q \setminus \{ 1 \}$.  We now apply
Theorem \ref{lem} to see that $P$ acts Frobeniusly on $Q$ and is the
quaternions. Therefore, $G$ is a Frobenius group with Frobenius
kernel $Q$ and Frobenius complement $P$ where $P$ is isomorphic to
the quaternions.
\end{proof}

\end{document}